\documentclass{amsart}
%
%
%

\usepackage{amsfonts}
\usepackage{amsmath,amscd}
\usepackage{amssymb}
\usepackage{amsthm}
\usepackage{newlfont}
\newcommand{\f}{\frac}

\newcommand{\ds}{\displaystyle}

 \newtheorem{thm}{Theorem}[section]
 
 \newtheorem{lem}[thm]{Lemma}
 
 \theoremstyle{definition}
 
 \theoremstyle{remark}

 \numberwithin{equation}{section}

\begin{document}

\title[Characterizing finite $p$-groups by their Schur multipliers, $t(G)=5$]
 {Characterizing finite $p$-groups by their Schur multipliers, $t(G)=5$}

\author[P. Niroomand]{Peyman Niroomand}
\address{School of Mathematics and Computer Science\\
Damghan University of Basic Sciences, Damghan, Iran}
\email{p$\_$niroomand@yahoo.com}

\thanks{\textit{Mathematics Subject Classification 2010.} Primary 20D15; Secondary 20E34, 20F18.}


\keywords{Schur multiplier, $p$-group.}



\begin{abstract}
Let $G$ be a finite $p$-group of order $p^n$. It is known that
$|\mathcal{M}(G)|=p^{\f{1}{2}n(n-1)-t(G)}$ and  $t(G)\geq 0$. The
structure of $G$ characterized when $t(G)\leq 4$ in
\cite{be,el,ni,sa,zh}. The structure description of $G$ is
determined in this paper for $t(G)=5$.
\end{abstract}

\maketitle
\section{introduction}
Let $G$ be a finite $p$-group and $\mathcal{M}(G)$ denotes the Schur
multiplier of $G$. It is known that
$|\mathcal{M}(G)|=p^{\f{1}{2}n(n-1)-t(G)}$, where $t(G)\geq 0$ by
the result of Green in \cite{ge}.

The Structure of $G$ is determined when $t(G)=0,1$ in \cite{be}. In
the case $t(G)=2$ and $3$, Zhou in \cite{zh} and Ellis in \cite{el}
determined the structure of $G$, respectively.  Recently all finite
$p$-group $G$ when $t(G)=4$ are listed in \cite{ni} by the author.
In the present paper, structure of all finite non-abelian $p$-groups
will be given when $t(G)=5$. Our method is quite different to that
of \cite{be,el,zh} and depends on the results of \cite{ni1,ni2}.
\section{Notations and preparatory results}
We use notations and terminology of \cite{el,ni}. In this paper,
$D_8$ and $Q_8$ denote the dihedral and quaternion group of order
$8$, $E_1$ and $E_2$ denote the extra special $p$-groups of order
$p^3$ of exponent $p$ and $p^2$, respectively.  $E_4$ denotes the
unique central product of a cyclic group of order $p^2$ and a
non-abelian group of order $p^3$. Also ${\mathbb{Z}}^{(m)}_{p^{n}}$
denotes the direct product of $m$ copies of the cyclic group of
order $p^n$.  We say that $G$ has the property $t(G)=5$ or briefly
with $t(G)=5$ if the order its Schur multiplier is equal to
$p^{\f{1}{2}n(n-1)-5}$.

 We state some
essential theorems which play important roles in the proof of our
Main Theorem, without proof as follows.
\begin{thm}$\mathrm{(See}$ \cite[Main Theorem]{ni1}$\mathrm{).}$\label{2} Let $G$ be a non-abelian finite
$p$-group of order $p^n$. If $|G^{'}| = p^k$, then we have
\[|\mathcal{M}(G)|\leq p^{\frac{1}{2}(n+k-2)(n-k-1)+1}.\] In
particular,
\[|\mathcal{M}(G)|\leq p^{\frac{1}{2}(n-1)(n-2)+1},\]
and the equality holds  in this last bound if and only if  $G=
E_1\times Z$, where $Z$ is an elementary abelian $p$-group.
\end{thm}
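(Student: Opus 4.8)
The plan is to prove the first (and stronger) inequality by strong induction on $k$, where $p^{k}=|G'|$, and then to deduce the ``in particular'' bound and its equality case. The only real tool is the Ganea exact sequence attached to a central subgroup $K\le G'\cap Z(G)$ with $|K|=p$: writing $\bar G=G/K$,
\[
K\otimes G^{ab}\xrightarrow{\ \nu\ }\mathcal{M}(G)\xrightarrow{\ \alpha\ }\mathcal{M}(\bar G)\xrightarrow{\ \partial\ }K\longrightarrow G^{ab}\longrightarrow\bar G^{ab}\longrightarrow 1 .
\]
Since $K\subseteq G'$ the map $G^{ab}\to\bar G^{ab}$ is an isomorphism, hence $\partial$ is onto; as $\operatorname{im}\nu$ is a quotient of $K\otimes G^{ab}\cong G^{ab}/pG^{ab}$, which has order $p^{d(G)}$, one obtains the master inequality
\[
|\mathcal{M}(G)|=|\operatorname{im}\nu|\cdot\frac{|\mathcal{M}(\bar G)|}{p}\ \le\ p^{\,d(G)-1}\,|\mathcal{M}(\bar G)| .
\]

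For the inductive step I would take $k\ge 2$, choose $K\le G'\cap Z(G)$ of order $p$ (this exists because $G'\cap Z(G)\ne 1$ in a non-abelian $p$-group), and note that $\bar G$ is non-abelian of order $p^{\,n-1}$ with $|\bar G'|=p^{\,k-1}$ and $d(\bar G)=d(G)\le n-k$ (the last because $K\le\Phi(G)$ and $|G^{ab}|=p^{\,n-k}$). It is convenient to rewrite the target exponent as $\tfrac12(n+k-2)(n-k-1)+1=\binom{n-1}{2}-\binom{k}{2}+1$. Feeding the inductive bound for $\bar G$ into the master inequality and using $\binom{n-1}{2}-\binom{n-2}{2}=n-2$ and $\binom{k}{2}-\binom{k-1}{2}=k-1$, the step reduces to $d(G)\le n-k$, which holds. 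The cruder estimate $|\mathcal{M}(\bar G)|\le p^{\binom{n-1}{2}}$ would only settle the case $d(G)\le 2$; carrying the sharp hypothesis is exactly what absorbs a large generator number.

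The base case $k=1$ is the crux and the only place requiring genuine work. Now $G'=K\cong\mathbb{Z}_p$ is central, $\bar G=G^{ab}$, and the master identity becomes $|\mathcal{M}(G)|=|\operatorname{im}\nu|\cdot|\mathcal{M}(G^{ab})|/p$. I would combine two facts: $|\mathcal{M}(G^{ab})|\le p^{\binom{n-1}{2}}$, with equality if and only if $G^{ab}$ is elementary abelian; and $|\operatorname{im}\nu|\le p^{2}$. For the second, since $|G'|=p$ the commutator map induces a non-zero alternating form on $G/Z(G)$ valued in $\mathbb{Z}_p$, which is onto as a set map, so a generator $z$ of $G'$ is a single commutator $z=[x,y]$; then the Hall--Witt (Jacobi) identity in the non-abelian exterior square---whose conjugation twists are trivial here since $G$ has class $2$ and $|G'|=p$---gives $z\wedge g\in\langle z\wedge x,\,z\wedge y\rangle$ for every $g\in G$, and both generators have order dividing $p$. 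Hence $|\operatorname{im}\nu|\le p^{2}$ and $|\mathcal{M}(G)|\le p\cdot p^{\binom{n-1}{2}}=p^{\frac12(n-1)(n-2)+1}$.

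The ``in particular'' bound is then immediate since $\binom{k}{2}\ge 0$. For its equality case, equality forces $\binom{k}{2}=0$, so $k=1$, and then equality in the base case forces $G^{ab}$ elementary abelian, $|\operatorname{im}\nu|=p^{2}$, and $\partial$ onto with kernel of index exactly $p$; invoking the structure of $p$-groups with $|G'|=p$ (central products of an extraspecial group with an abelian group), together with $\mathcal{M}(A\times B)\cong\mathcal{M}(A)\times\mathcal{M}(B)\times(A^{ab}\otimes B^{ab})$ and the values $\mathcal{M}(E_1)\cong\mathbb{Z}_p\times\mathbb{Z}_p$, $\mathcal{M}(E_2)=1$, one pins down the extraspecial factor as $E_1$ and the abelian complement as elementary abelian, i.e.\ $G\cong E_1\times Z$; the converse is a direct computation. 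The main obstacle is precisely this base case: proving $|\operatorname{im}\nu|\le p^{2}$ via the exterior-square identity, and---more delicately---the exact determination of the equality case, which requires the classification of $p$-groups with commutator subgroup of order $p$ and careful bookkeeping of the Schur multiplier of a central product.
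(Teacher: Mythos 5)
This theorem is quoted in the paper from \cite[Main Theorem]{ni1} and is explicitly stated \emph{without proof}, so there is no in-paper argument to match; judged against the published proof in that reference, your route is genuinely different in its key step. The source reduces to the case $|G'|=p$ and then leans on the structure theory of such groups (central products of an extraspecial group with an abelian group) together with Karpilovsky's formulas for the multipliers of extraspecial groups and of direct products; you make the same reduction via the Ganea/Jones inequality $|\mathcal{M}(G)|\le p^{d(G)-1}|\mathcal{M}(G/K)|$ for $K\le G'\cap Z(G)$ of order $p$ (your induction on $k$, using $d(G)\le n-k$ and the rewriting of the exponent as $\binom{n-1}{2}-\binom{k}{2}+1$, is arithmetically correct), but you handle the crucial base case $k=1$ homologically, by bounding the image of the Ganea map. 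That bound is valid: with $z=[x,y]$ generating the central $G'$, the exterior-square identity $z\wedge g=(x\wedge y)^{-1}\,{}^{g}(x\wedge y)$ together with the class-$2$ expansion ${}^{g}x=xz^{a}$, ${}^{g}y=yz^{b}$ gives $z\wedge g=(z\wedge y)^{a}(z\wedge x)^{-b}$, whence $|\operatorname{im}\nu|\le p^{2}$ and $|\mathcal{M}(G)|\le p\cdot|\mathcal{M}(G^{ab})|$. This is slicker than the structural argument for the \emph{inequality}; what it does not buy you is the \emph{equality} case, where you still have to import exactly what the source uses, namely the classification of $p$-groups with $|G'|=p$ as central products $E\,Z(G)$ and the multiplier computations that rule out the non-split central products (e.g.\ factors of type $E_4=E_1*\mathbb{Z}_{p^2}$, whose abelianization is still elementary abelian, so they are not excluded by your ``$G^{ab}$ elementary abelian'' condition and must be eliminated by computing their multipliers, as the Künneth formula applies only to direct products). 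You flag both of these dependencies honestly; with them supplied, the argument is complete.
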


\begin{thm}$\mathrm{(See}$ \cite[Theorem 2.2.10]{kar}\label{3}$\mathrm{).}$ For every finite groups $H$ and $K$, we have
\[\mathcal{M}(H\times K)\cong\mathcal{M}(H)\times \mathcal{M}(K)\times \ds\frac{H}{H'}\otimes\ds\frac{K}{K'}.\]
\end{thm}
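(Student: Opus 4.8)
The plan is to identify the Schur multiplier with the second integral homology group, $\mathcal{M}(G)\cong H_2(G,\mathbb{Z})$, and then to read the formula off the Künneth theorem for the homology of a direct product. Throughout I would use the two standard identifications valid for any group $G$, namely $H_0(G,\mathbb{Z})\cong\mathbb{Z}$ and $H_1(G,\mathbb{Z})\cong G/G'$.

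First I would invoke the Eilenberg--Zilber theorem: writing $C_\ast(G)$ for the normalized bar complex computing $H_\ast(G,\mathbb{Z})$, there is a chain homotopy equivalence $C_\ast(H\times K)\simeq C_\ast(H)\otimes_{\mathbb{Z}}C_\ast(K)$ (equivalently, on classifying spaces, $B(H\times K)\simeq BH\times BK$). Since each $C_n(H)$ is a free abelian group, hence flat over the principal ideal domain $\mathbb{Z}$, the algebraic Künneth theorem applies and yields, for every $n$, a short exact sequence
\[
0\longrightarrow\bigoplus_{i+j=n}H_i(H)\otimes H_j(K)\longrightarrow H_n(H\times K)\longrightarrow\bigoplus_{i+j=n-1}\operatorname{Tor}_1^{\mathbb{Z}}\bigl(H_i(H),H_j(K)\bigr)\longrightarrow 0 .
\]

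Next I would specialize to $n=2$. The tensor term becomes $\bigl(H_0(H)\otimes H_2(K)\bigr)\oplus\bigl(H_1(H)\otimes H_1(K)\bigr)\oplus\bigl(H_2(H)\otimes H_0(K)\bigr)$, which, using $\mathbb{Z}\otimes A\cong A$ together with the identifications above, is exactly $\mathcal{M}(K)\oplus\bigl(\frac{H}{H'}\otimes\frac{K}{K'}\bigr)\oplus\mathcal{M}(H)$. The Tor term for $n=2$ is $\operatorname{Tor}_1^{\mathbb{Z}}\bigl(H_0(H),H_1(K)\bigr)\oplus\operatorname{Tor}_1^{\mathbb{Z}}\bigl(H_1(H),H_0(K)\bigr)$, and since $H_0(H)\cong H_0(K)\cong\mathbb{Z}$ is free we have $\operatorname{Tor}_1^{\mathbb{Z}}(\mathbb{Z},-)=0$, so this term vanishes. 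Hence the short exact sequence collapses to an isomorphism $\mathcal{M}(H\times K)\cong\mathcal{M}(H)\oplus\mathcal{M}(K)\oplus\bigl(\frac{H}{H'}\otimes\frac{K}{K'}\bigr)$, which is the asserted formula; because the third term of the Künneth sequence is zero, one does not even need to appeal to the (non-canonical) splitting of that sequence.

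I do not expect any real obstacle: the entire content is the standard homological machinery, and the only point that genuinely needs care is the bookkeeping in the degree-$2$ Künneth sequence, namely verifying that the surviving Tor summands are precisely the two that involve $H_0$ and therefore vanish. If one prefers to avoid classifying spaces, an alternative is to take free presentations $H=F_1/R_1$, $K=F_2/R_2$, use the free product $F=F_1\ast F_2$ to present $H\times K=F/R$ with $R$ the normal closure of $R_1\cup R_2\cup[F_1,F_2]$, and evaluate Hopf's formula $\mathcal{M}(G)\cong(R\cap[F,F])/[F,R]$ directly; this recovers the three summands but at the cost of a somewhat tedious commutator computation, so the homological argument is the cleaner route.
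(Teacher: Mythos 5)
Your argument is correct: the identification $\mathcal{M}(G)\cong H_2(G,\mathbb{Z})$, Eilenberg--Zilber on the nerves, the K\"unneth sequence for complexes of free abelian groups, and the observation that the degree-$2$ Tor summands all involve $H_0\cong\mathbb{Z}$ and hence vanish together give exactly the stated isomorphism (and for finite groups the direct sum of finite abelian groups is the direct product appearing in the statement). Note, however, that the paper does not prove this result at all --- it is quoted verbatim from Karpilovsky's monograph (Theorem 2.2.10) as a preparatory fact --- so there is no in-paper proof to compare against; your homological derivation is the standard textbook proof of that cited theorem, and the Hopf-formula alternative you sketch is the classical commutator-calculus route.
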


\begin{thm}$\mathrm{(See}$ \cite[Theorem 3.3.6]{kar}\label{4}$\mathrm{).}$
Let G be an extra special $p$-group of order $p^{2m+1}$.
Then
\begin{itemize}
\item[(i)]  If $m\geq 2$, then ${|\mathcal M}(G)|=p^{2m^2-m-1}$.
\item[(ii)] If $m=1$, then the order of Schur multipliers of $D_8, Q_8, E_1$ and $E_2$ are
equal to
 $2,1,p^2$ and $1$, respectively.
\end{itemize}
\end{thm}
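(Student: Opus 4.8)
The plan is to build on the defining central extension
\[1\to Z\to G\to V\to 1,\]
where $Z=Z(G)=G'=\Phi(G)\cong\mathbb{Z}_p$ and $V:=G/Z\cong\mathbb{Z}_p^{2m}$, and to feed it into the Ganea exact sequence, i.e. the central-extension refinement of the five-term homology sequence. Iterating Theorem~\ref{3} together with $\mathcal{M}(\mathbb{Z}_p)=1$ shows $\mathcal{M}(V)\cong\Lambda^2 V\cong\mathbb{Z}_p^{\binom{2m}{2}}$, which has order $p^{m(2m-1)}=p^{2m^2-m}$.

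For this central extension the Ganea sequence reads
\[Z\otimes V\xrightarrow{\mu}\mathcal{M}(G)\xrightarrow{\beta}\mathcal{M}(V)\xrightarrow{\gamma}Z\to G/G'\to V\to 0.\]
Since $G/G'=V$ and the right-hand arrow is the identity, $Z\to G/G'$ is zero, so $\gamma$ is onto; and $\gamma\colon\Lambda^2 V\to Z$ is, up to sign, the map $\bar x\wedge\bar y\mapsto[x,y]$ induced by the commutator, which is non-degenerate, hence surjective. Therefore $\operatorname{im}\beta=\ker\gamma$ has order $p^{2m^2-m-1}$, and $\mathcal{M}(G)$ is an extension of $\ker\gamma$ by the quotient $\operatorname{im}\mu$ of $Z\otimes V\cong\mathbb{Z}_p^{2m}$; thus $p^{2m^2-m-1}\le|\mathcal{M}(G)|\le p^{2m^2+m-1}$, and everything reduces to computing $\operatorname{im}\mu$. (As a cross-check, Theorem~\ref{2} with $n=2m+1$ and $k=1$ independently gives $|\mathcal{M}(G)|\le p^{2m^2-m+1}$, the inequality being strict because $G$ is not of the form $E_1\times(\text{elementary abelian})$; but this still leaves a gap of a factor $p$.)

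The heart of the matter --- and the step I expect to be the main obstacle --- is to show that $\mu=0$ when $m\ge2$. By the commutator description of the Ganea map, $\mu(z\otimes\bar e)$ is the class of $[\hat z,\hat e]$ computed in a Schur cover $\hat G$ of $G$, for arbitrary lifts $\hat z,\hat e\in\hat G$. Since $Z=G'\cong\mathbb{Z}_p$ is generated by a single commutator $z=[b,c]$, we may take $\hat z\equiv[\hat b,\hat c]$ modulo the central subgroup $\mathcal{M}(G)\le\hat G$, whence $\mu(z\otimes\bar e)=[[\hat b,\hat c],\hat e]\in\gamma_3(\hat G)$; it therefore suffices to prove that $\hat G$ has nilpotency class $2$ whenever $m\ge2$. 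I would verify this from a consistent polycyclic presentation $G=F/S$ adapted to the class-$2$ structure (free generators $x_1,\dots,x_{2m}$, a central generator for $Z$, the symplectic commutator relations, and the appropriate power relations), computing $\mathcal{M}(G)=(S\cap F')/[F,S]$ by Hopf's formula and checking that every weight-$3$ relator already lies in $[F,S]$ as soon as the rank is at least $4$; the extra commutator generators $[x_i,x_j]$ available when $2m\ge4$ are precisely what forces this collapse, and its failure at rank $2$ is the source of the exceptional values. Equivalently, for $p$ odd and $G$ of exponent $p$ one identifies $\hat G$ with the free nilpotent-of-class-$2$, exponent-$p$ group on $2m$ generators, which has class $2$ by construction.

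Finally, for $m=1$ I would treat $D_8$, $Q_8$, $E_1$ and $E_2$ one at a time. In each case $V\cong\mathbb{Z}_p^2$, so $\mathcal{M}(V)\cong\mathbb{Z}_p$, the map $\gamma$ is an isomorphism onto $Z$, $\ker\gamma=1$, and hence $\mathcal{M}(G)\cong\operatorname{im}\mu\le Z\otimes V\cong\mathbb{Z}_p^2$. A direct computation with a small presentation via Hopf's formula (or an appeal to the classical tables of Schur multipliers of groups of order at most $p^3$) then identifies the image, giving $\mathcal{M}(E_1)\cong\mathbb{Z}_p^2$ --- here $\mu$ is an isomorphism, which is exactly why the class-$2$ collapse of the preceding paragraph must fail at $m=1$ --- together with $\mathcal{M}(E_2)=\mathcal{M}(Q_8)=1$ and $\mathcal{M}(D_8)\cong\mathbb{Z}_2$.
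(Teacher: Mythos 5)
The paper offers no proof of this statement: it is quoted verbatim from Karpilovsky (Theorem 3.3.6), so your argument is necessarily an independent route rather than a variant of the paper's. Your Ganea-sequence framework is set up correctly. With $Z=G'=Z(G)\cong\mathbb{Z}_p$ and $V=G/Z$ elementary abelian of rank $2m$, exactness gives $|\mathcal{M}(G)|=|\operatorname{im}\mu|\cdot|\ker\gamma|=|\operatorname{im}\mu|\cdot p^{2m^2-m-1}$, the transgression $\gamma$ is indeed surjective, and the whole of (i) does reduce to showing $\mu=0$ for $m\geq2$, while the four $m=1$ values are classical and each is a short Hopf-formula exercise (deferring to standard tables there is no worse than what the paper itself does throughout).

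The only substantive gap is that your central claim --- every weight-$3$ relator lies in $[F,S]$ once $2m\geq4$ --- is asserted with a heuristic rather than proved. It can be closed cleanly, and uniformly over all isomorphism types of extraspecial groups (a point worth making explicit, since (i) covers both exponents for odd $p$ and both types for $p=2$): choose a symplectic basis $x_1,y_1,\dots,x_m,y_m$ of $V$ with $[x_i,y_i]=z$ and all other basic commutators trivial, which exists because every extraspecial group is a central product of extraspecial groups of order $p^3$. In $\hat G=F/[F,S]$ the kernel $S/[F,S]$ is central; if $\hat e$ lifts a basis element orthogonal to the plane $\langle x_1,y_1\rangle$, then $[\hat x_1,\hat e]$ and $[\hat y_1,\hat e]$ are central, so $[\hat x_1,\hat y_1]^{\hat e}=[\hat x_1[\hat x_1,\hat e],\,\hat y_1[\hat y_1,\hat e]]=[\hat x_1,\hat y_1]$; for $e\in\{x_1,y_1\}$ replace $[\hat x_1,\hat y_1]$ by $[\hat x_2,\hat y_2]$, which is another lift of $z$ and hence differs from it by a central factor. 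Thus the lift $[\hat x_1,\hat y_1]$ of $z$ is central in $\hat G$, so $\operatorname{im}\mu=1$; the argument visibly fails at $m=1$, exactly as you predicted. One caveat: your closing aside identifying $\hat G$ with the free class-$2$ exponent-$p$ group on $2m$ generators is circular as stated, since knowing that this quotient is a full Schur cover already presupposes the value of $|\mathcal{M}(G)|$; it should be demoted to a consistency check (it does give the lower bound $|\mathcal{M}(G)|\geq p^{2m^2-m-1}$, but that already follows from $|\ker\gamma|$).
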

\section{Main Theorem}
In this section we intend to characterize all finite non-abelian
$p$-groups with the property $t(G)=5$. In fact, we have
\begin{thm}[Main Theorem]Let $G$ be a non-abelian $p$-group
of order $p^n$. Then \[|\mathcal{M}(G)|=p^{\f{1}{2}n(n-1)-5}\] if
and only if $G$ is isomorphic to one of the following groups.
\begin{itemize}
\item[(1)]$D_8\times {\mathbb{Z}}^{(3)}_2$,
\item[(2)]$E_1\times {\mathbb{Z}}^{(4)}_p$,
\item[(3)]$E_2\times {\mathbb{Z}}^{(2)}_p$,
\item[(4)]$ E_4\times{\mathbb{Z}}_p$,
\item[(5)]extra special $p$-group of order $p^5$,
\item[(6)]$\langle a,b~|~a^{p^2}=1,
b^{p^2}=1,[a,b,a]=[a,b,b]=1, [a,b]=a^p\rangle$,
\item[(7)]$\langle a,b~|~a^{p^2}=b^p=1,[a,b,a]=[a,b,b]=a^p,[a,b,b,b]=1
\rangle$,
\item[(8)]$\langle
a,b~|~a^{p^2}=b^p=1,[a,b,a]=1,[a,b,b]=a^{np},[a,b,b,b]=1 \rangle,$
where n is a fixed quadratic non-residue of $p$ and $p\neq 3$,
\item[(9)]$\langle
a,b~|~a^{p^2}=1,b^3=a^3,[a,b,a]=1,[a,b,b]=a^{6},[a,b,b,b]=1
\rangle,$
\item[(10)]$\langle
a,b~|~a^p=1,b^p=[a,b,b],[a,b,a]=[a,b,b,a]=[a,b,b,b]=1 \rangle$,
\item[(11)]$D_{16}$,
\item[(12)]$~\langle a,b~|~a^4=b^4=1,a^{-1}ba=b^{-1} \rangle$,
\item[(13)]$Q_{8}\times {{\mathbb{Z}}}^{(2)}_{2}$,
\item[(14)]$(D_8\times{\mathbb{Z}}_{2})\rtimes{\mathbb{Z}}_{2}$,
\item[(15)]$(Q_8\times{\mathbb{Z}}_{2})\rtimes{\mathbb{Z}}_{2}$,
\item[(16)]${\mathbb{Z}}_{2}\times \langle
a,b,c~|~a^2=b^2=c^2=1, abc=bca=cab\rangle.$

\end{itemize}
\end{thm}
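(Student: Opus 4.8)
The plan is to bound $n$ via Theorem~\ref{2}, then to split off elementary abelian direct factors using Theorem~\ref{3}, thereby reducing to a short list of ``indecomposable'' groups which are handled directly with the aid of the sharper multiplier estimates of \cite{ni1,ni2}; the converse direction is a computation for each of the sixteen groups. The substantial part will be the last step in the case $|G|=p^5$.

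\emph{Step 1 (bounding $n$).} Put $|G'|=p^k$. Substituting $|\mathcal{M}(G)|=p^{\f12 n(n-1)-5}$ into the inequality of Theorem~\ref{2} and simplifying gives $n\le 7-\tfrac12 k(k-1)$. As $G$ is non-abelian, $G/G'$ is non-cyclic, so $n\ge k+2$; combined with $\f12 n(n-1)-5\ge 0$ this forces $k\in\{1,2\}$ and $4\le n\le 7$ (with $n\le 6$ when $k=2$), so that $|G|\in\{p^4,p^5,p^6,p^7\}$.

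\emph{Step 2 (peeling off abelian direct factors).} Write $G=G_0\times\mathbb{Z}_p^{(r)}$ with $r$ maximal, so that $G_0$ has no non-trivial abelian direct factor; put $|G_0|=p^{n_0}$ and $d=d(G_0)$. Iterating Theorem~\ref{3} (and using $\mathcal{M}(\mathbb{Z}_p)=1$) gives $|\mathcal{M}(G)|=|\mathcal{M}(G_0)|\cdot p^{\binom r2+rd}$, whence
\[
t(G)=t(G_0)+r(n_0-d)=t(G_0)+r\log_p|\Phi(G_0)|.
\]
Since $G_0$ is non-abelian, $\log_p|\Phi(G_0)|\ge 1$ and $t(G_0)\ge1$ (the latter by Theorem~\ref{2}), so if $r\ge1$ then $t(G_0)\le4$ and $G_0$ is one of the groups without abelian direct factor already classified in \cite{be,el,ni,sa,zh}. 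Inspecting that list, and for each admissible $G_0$ reading off $\log_p|\Phi(G_0)|$ and the unique $r\ge1$ (if any) with $t(G_0)+r\log_p|\Phi(G_0)|=5$, one recovers precisely the groups $(1)$, $(2)$, $(3)$, $(4)$ and $(13)$ (for instance $G_0=E_1$, with $t(G_0)=1$ and $|\Phi(G_0)|=p$, forces $r=4$, i.e.\ $E_1\times\mathbb{Z}_p^{(4)}$).

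\emph{Step 3 (the indecomposable case $r=0$).} It remains to determine the non-abelian $p$-groups $G$ with $t(G)=5$ having no abelian direct factor. By Step~1, $4\le n\le 7$; using the equality case of Theorem~\ref{2} when $n=7$, and a short further application of Theorem~\ref{2} to the direct factors of $G$ when $n=6$, one checks that $n=6$ and $n=7$ each force an abelian direct factor, so $n\in\{4,5\}$ and correspondingly $|\mathcal{M}(G)|=p$ or $p^5$. One then runs through the non-abelian groups of order $p^4$, resp.\ $p^5$, without abelian direct factor: here the finer upper bounds for $|\mathcal{M}(G)|$ in terms of $d(G)$, of the nilpotency class, and of $|G/\gamma_3(G)|$ proved in \cite{ni1,ni2} reduce the candidates to a handful, whose Schur multipliers are then computed one at a time (via the Hopf formula, Theorem~\ref{3} where applicable, and Theorem~\ref{4} for the extraspecial groups). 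This produces the groups $(5)$--$(12)$ and $(14)$--$(16)$, the appearance of $p$ in $(8)$--$(9)$ reflecting the $p$-dependent classification of the relevant order-$p^4$ groups (quadratic residues $\bmod\, p$, with $p=3$ exceptional). The converse---that each of the sixteen groups satisfies $|\mathcal{M}(G)|=p^{\f12 n(n-1)-5}$---follows by the same tools, group by group. The main obstacle is the order-$p^5$ subcase of Step~3: Theorem~\ref{2} alone is far too weak to eliminate the many groups of that order, so everything there rests on the finer estimates of \cite{ni1,ni2}, and keeping the $p$-dependence of the order-$p^4$ and order-$p^5$ classifications (and the small exceptional primes) straight is delicate.
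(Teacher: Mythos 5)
Your Step 2 is a genuinely different (and attractive) device that the paper does not use: the identity $t\bigl(G_0\times\mathbb{Z}_p^{(r)}\bigr)=t(G_0)+r\log_p|\Phi(G_0)|$, obtained by iterating Theorem \ref{3}, correctly reduces every group with an abelian direct factor to the already-classified cases $t\le 4$, and your Step 1 bound $n\le 7-\frac{1}{2}k(k-1)$ agrees with Lemma \ref{m1} and Theorem \ref{m2}. However, the execution has two genuine gaps. First, your bookkeeping between Steps 2 and 3 is inconsistent: group $(16)$ is $\mathbb{Z}_2\times G_0$ with $G_0$ the directly indecomposable central product of order $16$, so it \emph{has} an abelian direct factor and must be produced by Step 2 (from a $t(G_0)=4$ entry of the earlier classification with $\log_2|\Phi(G_0)|=1$); yet you assert that Step 2 yields ``precisely $(1),(2),(3),(4),(13)$'' and then assign $(16)$ to Step 3, whose stated scope (no abelian direct factor) excludes it. As written, the case division would lose this group.

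Second, and more seriously, Step 3 in the case $|G|=p^5$, $p$ odd, is not an argument but a promissory note: ``the finer upper bounds\dots reduce the candidates to a handful'' cannot be verified as stated, and ``running through'' the groups of order $p^5$ is not a finite check uniformly in $p$, since their number grows with $p$. The paper's actual work here is structural: it first eliminates $|G'|\ge p^2$ by passing to a central quotient of order $p$ and combining \cite[Theorem 4.1]{kar} with the classification in \cite{ni2} of order-$p^4$ quotients with large multiplier (Theorem \ref{tm1}); then, for $|G'|=p$, it splits on $|Z(G)|\in\{p^3,p^2,p\}$, using the inequality $|\mathcal{M}(G)||G'|\le|\mathcal{M}(G/G')||G'\otimes G/Z(G)|$, the forced elementary abelianness of $G/G'$, Jones's bound on the exponent of $Z(G)$, and the structure lemma of \cite{ni1}, arriving at $E_2\times\mathbb{Z}_p^{(2)}$, $E_4\times\mathbb{Z}_p$ and the extraspecial group of order $p^5$. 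None of this appears in your outline, so the case you yourself identify as the crux of the theorem remains unproved.
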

We separate the proof of it into several steps as follows.
\begin{lem}\label{m1} Let $G$ be a $p$-group of order $p^n$ and  $|G^{'}|= p^k (k\geq 2)$ with $t(G)=5$.
Then $n\leq 4$ unless $k=2$, in this case $n\leq 6.$
\end{lem}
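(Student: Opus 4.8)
The plan is to use Theorem~\ref{2} as the main quantitative engine. Write $|G|=p^n$ and $|G'|=p^k$ with $k\geq 2$. Since $t(G)=5$ we have $|\mathcal{M}(G)|=p^{\frac12 n(n-1)-5}$, while Theorem~\ref{2} gives
\[|\mathcal{M}(G)|\leq p^{\frac12(n+k-2)(n-k-1)+1}.\]
Comparing exponents yields the inequality
\[\tfrac12 n(n-1)-5\leq \tfrac12(n+k-2)(n-k-1)+1,\]
which, after expanding $(n+k-2)(n-k-1)=n(n-1)-(k^2+k-2)$ and simplifying, becomes $k^2+k-2\leq 12$, i.e. $k^2+k\leq 14$. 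This forces $k\leq 3$; more precisely $k\in\{2,3\}$ given the hypothesis $k\geq 2$. So the first step disposes of all $k\geq 4$ outright, and the remaining work is to bound $n$ in the two surviving cases.

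For $k=3$ the same exponent comparison must be pushed harder: substituting $k=3$ into the refined inequality $k^2+k-2\leq 12$ gives equality-adjacent slack, so I would instead compare $\frac12 n(n-1)-5$ with $\frac12(n+1)(n-4)+1$ and extract the inequality $n(n-1)-5\le n(n-1)-6+2$, i.e. $n(n-1)\le n(n-1)+(-1)$ — the point being that the margin is so tight that $n$ is pinned down to a small range (I expect $n\leq 4$). One then checks the boundary value directly, using the equality clause of Theorem~\ref{2} (equality forces $G=E_1\times Z$, which has $k=1$, a contradiction with $k=3$) to rule out the extremal $n$, leaving $n\le 4$. For $k=2$ the slack in $k^2+k-2\le 12$ is larger, so the bound on $n$ coming purely from Theorem~\ref{2} is weaker; here I would substitute $k=2$ to get $|\mathcal{M}(G)|\le p^{\frac12 n(n-3)+1}$ and compare with $p^{\frac12 n(n-1)-5}$, obtaining $n(n-3)+2\ge n(n-1)-10$, i.e. $-2n+2\ge -10$, i.e. $n\le 6$. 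This is exactly the claimed bound, so the case $k=2$ needs no further argument beyond this comparison.

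The one subtlety is that in the $k=3$ case the crude arithmetic might leave $n=5$ (or even $n=6$) as a formally possible value that must be excluded by hand; handling that is the only place real group-theoretic input (rather than exponent-counting) enters. I would close it either by invoking the equality case of Theorem~\ref{2} — since equality there forces $|G'|=p$, any group attaining the bound cannot have $k=3$, so a group with $k=3$ satisfies the \emph{strict} inequality $|\mathcal{M}(G)|<p^{\frac12(n+1)(n-4)+1}$, i.e. $|\mathcal{M}(G)|\le p^{\frac12(n+1)(n-4)}$, which sharpens the comparison to $n(n-1)-5\le (n+1)(n-4)=n(n-1)-4-n$, giving $n\le 1$, absurd unless $n$ is small — and then doing a short direct analysis of the finitely many remaining orders. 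The main obstacle is thus not the bulk arithmetic (which is routine) but making sure the borderline exponent comparisons are done with the correct strict-versus-non-strict inequalities, so that the stated bounds $n\le 4$ for $k=3$ and $n\le 6$ for $k=2$ come out sharp rather than off by one.
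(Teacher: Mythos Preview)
Your approach is the same as the paper's --- compare the exponent $\tfrac{1}{2}n(n-1)-5$ of $|\mathcal{M}(G)|$ against the bound of Theorem~\ref{2} --- but an algebra slip sends you into unnecessary complications. The expansion $(n+k-2)(n-k-1)=n(n-1)-(k^2+k-2)$ is wrong; the correct product is $n^2-3n+2-k^2+k$. With this, the exponent comparison
\[
\tfrac{1}{2}n(n-1)-5\ \le\ \tfrac{1}{2}\bigl(n^2-3n+2-k^2+k\bigr)+1
\]
simplifies (multiply by $2$) to $2n\le 14-k(k-1)$, i.e.\ $n\le 7-\tfrac{k(k-1)}{2}$. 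For $k=2$ this gives $n\le 6$, for $k=3$ it gives $n\le 4$, and for $k\ge 4$ it gives $n\le 1$, absurd since $k\le n-1$. So the lemma drops out immediately; your $k=2$ case was computed correctly, but your $k=3$ discussion (``I expect $n\le 4$'') is resolved by the same one-line arithmetic, and your entire final paragraph is superfluous.

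One further caution: the equality characterization in Theorem~\ref{2} (``$G=E_1\times Z$'') refers only to the second displayed bound $p^{\frac12(n-1)(n-2)+1}$, not to the general bound $p^{\frac{1}{2}(n+k-2)(n-k-1)+1}$. Hence the strict-inequality sharpening you propose for $k=3$ would not be justified as stated, even if it were needed --- which, as shown above, it is not.
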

\begin{proof}By virtue of Theorem \ref{2}, we have
\[\f{1}{2}(n^2-n-10)\leq \frac{1}{2}(n+k-2)(n-k-1)+1\leq
\f{1}{2}n(n-3)+1,\] which follows the result.
\end{proof}

\begin{thm}\label{m2} Let $G$ be a non-abelian finite $p$-group of order $p^n$
with $t(G)=5$.  Then $|G|\leq p^7$. In the case that $n=6$ and
$n=7$, $G$ is isomorphic to \[D_8\times {\mathbb{Z}}^{(3)}_2
~\text{and}~E_1\times {\mathbb{Z}}^{(4)}_p,\] respectively.
\end{thm}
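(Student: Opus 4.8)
The plan is to pin down the order of $G$ by a two-front attack, first handling the abelian derived subgroup case and then the case $|G'| = p^k$ with $k \geq 2$, and finally matching the surviving large groups against the known classification.

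\textbf{Step 1: the case $|G'| = p$.}
Here $G'$ is central of order $p$, so $G$ is a ``generalized extraspecial'' group and in fact $G \cong X \times Z$ where $X$ is extraspecial (or $X$ is one of $E_1, E_2, E_4$-type, more precisely $X/X'$ need not be homocyclic, but $X'=G'$) and $Z$ is elementary abelian of rank $d$, say. The plan is to use Theorem \ref{3} to compute $|\mathcal{M}(G)|$ in terms of $|\mathcal{M}(X)|$, $|\mathcal{M}(Z)|$, and $|X/X' \otimes Z|$, and then use the known formula $|\mathcal{M}(E_1 \times (\text{el.ab. of rank } m))|$ etc.\ together with Theorem \ref{4} to read off $t$. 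One finds that as the rank of the abelian direct factor grows, $t(G)$ grows at least linearly, so $t(G) = 5$ forces the abelian factor to be small and hence $n$ is bounded; a short case check (extraspecial of order $p^3$ or $p^5$ times a small elementary abelian group, plus the $E_4$ and $D_8, Q_8$ variants) shows $n \leq 7$ in this subcase, with $n = 7$ only for $E_1 \times {\mathbb{Z}}_p^{(4)}$ and $n = 6$ only for $D_8 \times {\mathbb{Z}}_2^{(3)}$.

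\textbf{Step 2: the case $|G'| = p^k$, $k \geq 2$.}
By Lemma \ref{m1}, either $n \leq 4$ (which is already within the claimed bound $n \leq 7$ and needs no further work here), or $k = 2$ and $n \leq 6$. So I only need to rule out $n = 5, 6$ with $|G'| = p^2$. The plan is to push the inequality in Lemma \ref{m1} harder: the bound $|\mathcal{M}(G)| \leq p^{\frac{1}{2}(n+k-2)(n-k-1)+1}$ of Theorem \ref{2} with $k = 2$ gives $t(G) \geq \frac{1}{2}n(n-1) - \big(\frac{1}{2}n(n-3)+1\big) = n - 1$; so $t(G) = 5$ forces $n \leq 6$, and $n = 6$ forces equality throughout Theorem \ref{2}'s chain, hence (by the equality clause, which characterizes equality in the coarser bound only for $E_1 \times Z$, a group with $|G'| = p$) a contradiction. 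That kills $n = 6$ outright. For $n = 5$, $|G'| = p^2$: this is the delicate case; equality in $t(G) = 5 = n-1$ again forces $|\mathcal{M}(G)| = p^{\frac{1}{2}(n+k-2)(n-k-1)+1}$, and I expect to invoke the finer structural results of \cite{ni1,ni2} (the refined classification of groups attaining the Niroomand bound) to conclude that no such group of order $p^5$ with $|G'|=p^2$ exists, or that any candidate has $t(G) > 5$. This verification is the main obstacle: unlike $k=1$, the bound in Theorem \ref{2} is not always sharp, so I must either sharpen it for $k = 2$, $n = 5$ or enumerate the (few) isoclinism families of groups of order $p^5$ with derived subgroup of order $p^2$ and check each Schur multiplier directly.

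\textbf{Step 3: assembling the conclusion.}
Combining Steps 1 and 2: if $|G'| = p$ then $n \leq 7$ with the two stated exceptional groups at $n = 6, 7$; if $|G'| \geq p^2$ then $n \leq 5 \leq 7$ and in fact at $n=6,7$ this case does not occur. Hence $|G| \leq p^7$, and the groups of order $p^7$ and $p^6$ with $t(G) = 5$ are exactly $E_1 \times {\mathbb{Z}}_p^{(4)}$ and $D_8 \times {\mathbb{Z}}_2^{(3)}$ respectively, which is the assertion. The bulk of the remaining work in the paper (the $n \leq 5$ cases) is then separate; here I only need the bound and the two top cases, so the write-up can be kept to the inequality manipulations of Steps 1--2 plus a pointer to \cite{ni1,ni2} for the one genuinely hard exclusion.
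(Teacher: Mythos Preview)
Your Step 2 contains a genuine gap at the case $n=6$, $|G'|=p^2$. You argue that $t(G)=5=n-1$ forces ``equality throughout Theorem \ref{2}'s chain'' and then invoke the equality clause to derive a contradiction. But the equality clause in Theorem \ref{2} characterizes equality in the \emph{coarse} bound $|\mathcal{M}(G)|=p^{\frac{1}{2}(n-1)(n-2)+1}$, which for $n=6$ is $p^{11}$. What you actually have is equality in the \emph{finer} bound $p^{\frac{1}{2}(n+k-2)(n-k-1)+1}=p^{10}$ (with $k=2$), which is strictly smaller; the equality clause simply does not apply, and no contradiction follows. Excluding this case is precisely the content of the classification in \cite{ni2} of non-abelian $p$-groups with $|\mathcal{M}(G)|=p^{\frac{1}{2}(n-1)(n-2)}$ (one below the Niroomand maximum), and that is what the paper invokes for $n=6$; you cannot bypass it with the inequality manipulation you propose.

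Your Step 1 also rests on the decomposition $G\cong X\times Z$ with $X$ extraspecial, which is false in general for $|G'|=p$ (take any group with $G/G'$ not elementary abelian, or the central product $E_4$); the parenthetical hedge does not repair the argument, and a correct case analysis is longer than you suggest. The paper's route is much shorter: the coarse bound of Theorem \ref{2} alone gives $n\leq 7$ immediately (since $\frac{1}{2}n(n-1)-5\leq\frac{1}{2}(n-1)(n-2)+1$ forces $n\leq 7$); for $n=7$ Lemma \ref{m1} gives $|G'|=p$ and then the equality clause of Theorem \ref{2} identifies $E_1\times\mathbb{Z}_p^{(4)}$; for $n=6$ the paper cites \cite{ni2}. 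Finally, your attempt to exclude $n=5$, $|G'|=p^2$ here is unnecessary for this theorem, which makes no claim about $n=5$; that exclusion is handled separately in Theorem \ref{tm1}.
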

\begin{proof} One can easily check that $n\leq
7$ by Theorem \ref{2}.

In the case $n=7$, Lemma \ref{m1} follows that $|G^{'}|=p$. Since
$|\mathcal{M}(G)|=p^{16}$ and equality holds in Theorem \ref{2}, we
should have $G\cong E_1\times {\mathbb{Z}}^{(4)}_p$. When $n=6$,
$|\mathcal{M}(G)|=p^{10}$ and by a consequence of \cite[Main
Theorem]{ni2}, we have $G\cong D_8\times {\mathbb{Z}}^{(3)}_2$.
\end{proof}
As mentioned in the Lemma \ref{m1} and Theorem \ref{m2}, we may
assume that $n\leq 5$.  First assume that $p\neq 2$.

\begin{thm}\label{tm1} Let $|G|=p^5$ $(p\neq 2)$ and $|G^{'}|\geq
p^2$. Then there is no group with $t(G)=5$.
\end{thm}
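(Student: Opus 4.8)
The plan is to combine the bound of Theorem \ref{2} with a careful case analysis on $k$, where $|G'|=p^k$. Since $|G|=p^5$ and $|G'|\geq p^2$, Lemma \ref{m1} already forces $k=2$ (if $t(G)=5$), so I may assume $|G'|=p^2$ and $n=5$ throughout. The target is $|\mathcal{M}(G)|=p^{\f{1}{2}\cdot 5\cdot 4-5}=p^{5}$. Plugging $n=5$, $k=2$ into Theorem \ref{2} gives $|\mathcal{M}(G)|\leq p^{\f{1}{2}(5+2-2)(5-2-1)+1}=p^{\f{1}{2}\cdot 5\cdot 2+1}=p^{6}$, so the group-theoretic bound does not immediately rule out $t(G)=5$; equality in Theorem \ref{2} is attained only by $E_1\times Z$ with $Z$ elementary abelian, which has $G'$ of order $p$, a contradiction to $|G'|=p^2$. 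Hence $|\mathcal{M}(G)|\leq p^{5}$ is not yet forced and I must dig one step deeper.

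The main step is therefore to show $|\mathcal{M}(G)|\leq p^{4}$ whenever $|G|=p^5$, $p\neq 2$, and $|G'|=p^2$, which contradicts $t(G)=5$. For this I would invoke the classification and Schur-multiplier computations for $p$-groups of order $p^5$ from \cite{ni1,ni2} (the paper explicitly says its method "depends on the results of \cite{ni1,ni2}"). Concretely, a $p$-group of order $p^5$ with $|G'|=p^2$ has nilpotency class $2$ or $3$; in either case one bounds $|\mathcal{M}(G)|$ using the exact sequence relating $\mathcal{M}(G)$, $\mathcal{M}(G/G')$, and $G'\cap [G,G]$, together with the fact that $G/G'$ has order $p^3$ and rank at most $3$. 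Since a $p$-group that is not a direct factor by an elementary abelian group has a strictly smaller multiplier, and the groups in the $t(G)\leq 4$ lists of \cite{be,el,ni,zh} of order $p^5$ with $|G'|=p^2$ have $t(G)\leq 4$ with $t(G)\neq 5$, one checks the finitely many relevant isomorphism types directly. The key inequality to establish is that $\f{1}{2}(n+k-2)(n-k-1)+1 - \bigl(\text{actual } t\text{-defect}\bigr)$ always leaves $t(G)\geq 6$ in these cases.

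The hard part will be making the "one step deeper" rigorous without simply quoting the full classification of order-$p^5$ groups: I need a self-contained argument that the value $p^{6}$ in Theorem \ref{2} is never attained when $|G'|=p^2$ and $n=5$, and moreover that $p^{5}$ is not attained either. The cleanest route is probably to use the refinement of \cite{ni1} that characterizes near-extremal cases — i.e., a structural description of groups with $|\mathcal{M}(G)|$ close to the Theorem \ref{2} bound — and observe that all such groups of order $p^5$ either have $|G'|=p$ (excluded) or have $t(G)\in\{2,3,4\}$ already recorded in \cite{el,ni,zh}, never $t(G)=5$. Once that structural input is in hand, the proof reduces to the arithmetic verification above, and the exponent-$p$ versus exponent-$p^2$ distinction for $p\neq 2$ plays no essential role beyond keeping the case list finite.
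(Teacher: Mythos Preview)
Your reduction is fine: Lemma \ref{m1} forces $|G'|=p^2$, and you correctly observe that Theorem \ref{2} by itself only gives $|\mathcal{M}(G)|\leq p^6$, so one more idea is needed to push the bound down to $p^4$. The gap is that you never supply that idea. Your proposal oscillates between ``use the classification of groups of order $p^5$'' and ``use some near-extremal refinement of \cite{ni1}'', but you do not state any such refinement, and you yourself flag that invoking the full order-$p^5$ classification is exactly what you want to avoid. The exact sequence you mention, relating $\mathcal{M}(G)$ to $\mathcal{M}(G/G')$, is the wrong tool here: with $|G/G'|=p^3$ it gives at best $|\mathcal{M}(G)|\leq p^5$, which does not exclude $t(G)=5$.

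The paper's argument supplies the missing mechanism, and it is quite different from what you sketch. Instead of quotienting by $G'$, it quotients by a central subgroup $K$ of order $p$ and uses the inequality $|\mathcal{M}(G)|\leq p^2\,|\mathcal{M}(G/K)|$ from \cite[Theorem 4.1]{kar}. Now $G/K$ has order $p^4$, where the near-extremal structure \emph{is} known: by \cite{ni2}, if every such quotient had $|\mathcal{M}(G/K)|=p^4$ then $G\cong \mathbb{Z}_p^{(4)}\rtimes\mathbb{Z}_p$, forcing $|\mathcal{M}(G)|=p^6$, a contradiction. Hence some central $K$ gives $|\mathcal{M}(G/K)|\leq p^3$; but for $p\neq 2$ and $|G/K|=p^4$, \cite{ni2} further rules out the value $p^3$, so in fact $|\mathcal{M}(G/K)|\leq p^2$ and therefore $|\mathcal{M}(G)|\leq p^4$. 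This is precisely where the hypothesis $p\neq 2$ is used, contrary to your closing remark that it ``plays no essential role''. Your plan would become a proof only if you replaced the vague appeal to near-extremal results in order $p^5$ by this concrete descent to order $p^4$.
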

\begin{proof} Using Lemma \ref{m1}, we may assume that $|G^{'}|=p^2$.

For each central subgroup $K$ of order $p$, \cite[Theorem 4.1]{kar}
implies that
\[p^{5}=|\mathcal{M}(G)|\leq p^2~|\mathcal{M}(G/K)|.\]
If for every central subgroup $K$, $|\mathcal{M}(G/K)|=p^4$ the
proof of \cite[Main Theorem]{ni2} shows that $G\cong
{\mathbb{Z}}^{(4)}_p\rtimes {\mathbb{Z}}_p$ and hence
$|\mathcal{M}(G)|=p^6$, which is a contradiction. Thus there exists
a central subgroup $K$ such that $|\mathcal{M}(G/K)|\leq p^3$. Since
$p\neq 2$ and $|G/K|=p^4$, \cite[Main Theorem]{ni2} follows that
$|\mathcal{M}(G/K)|\leq p^2$, and so $|\mathcal{M}(G)|\leq p^4$,
which contradicts the assumption.
\end{proof}
\begin{thm}\label{tm} Let $|G|=p^5$ $(p\neq 2)$ and  $|Z(G)|=p^3$ with
$t(G)=5$, then $G$ is isomorphic to \[E_2\times {\mathbb{Z}}^{(2)}_p
~\text{or}~ E_4\times{\mathbb{Z}}_p.\]
\end{thm}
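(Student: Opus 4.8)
The plan is to exploit the hypothesis $|Z(G)| = p^3$ together with the non-abelian condition to pin down the isomorphism type of $G$ almost completely before computing any multiplier. Since $|G| = p^5$ and $|Z(G)| = p^3$, the central quotient $G/Z(G)$ has order $p^2$, hence is abelian, so $G$ is of nilpotency class $2$ and $G' \subseteq Z(G)$ with $G/Z(G) \cong \mathbb{Z}_p \times \mathbb{Z}_p$; in particular $|G'| = p$ (if $G' $ were trivial $G$ would be abelian; it cannot be larger since the commutator map factors through $G/Z(G) \times G/Z(G)$ and its image is generated by a single commutator). So $G$ is a class-$2$ group of order $p^5$ with centre of order $p^3$ and derived subgroup of order $p$. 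Writing $G = \langle a, b \rangle Z(G)$ with $a, b$ chosen so that $G/Z(G) = \langle \bar a, \bar b\rangle$, the commutator $[a,b]$ generates $G'$, and the remaining structure is controlled by the abelian group $Z(G)$ of order $p^3$ and by the images of $a^p, b^p$ in $Z(G)$.

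Next I would reduce to a normal-form classification. Because $p \neq 2$ and $G$ has class $2$, one has the standard identity $(xy)^p = x^p y^p [y,x]^{\binom p2}$, and since $p \mid \binom p2$ the $p$-th power map $G \to Z(G)$ is a homomorphism; its image is a subgroup of $Z(G)$. The key structural dichotomy is whether $G$ has an abelian direct factor. The central insight is that a class-$2$ $p$-group of order $p^5$ with $|Z| = p^3$ and $|G'| = p$ must decompose as (an extra-special or closely related group) $\times$ (elementary abelian or cyclic factor). Concretely, I expect exactly two possibilities to survive: $G \cong E_2 \times \mathbb{Z}_p^{(2)}$, where $E_2$ is the extra-special group of order $p^3$ and exponent $p^2$, which has centre $\mathbb{Z}_p$ so that $E_2 \times \mathbb{Z}_p^{(2)}$ has centre of order $p^3$; and $G \cong E_4 \times \mathbb{Z}_p$, where $E_4$ is the central product of $\mathbb{Z}_{p^2}$ with a non-abelian group of order $p^3$ (an order-$p^4$ group with centre $\mathbb{Z}_{p^2}$), so that $E_4 \times \mathbb{Z}_p$ again has centre of order $p^3$. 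The remaining candidates — $E_1 \times \mathbb{Z}_p^{(2)}$ (extra-special of exponent $p$, order $p^3$) and $\mathbb{Z}_{p^2} \times \mathbb{Z}_p^{(3)}$-type situations — must be excluded: the latter is abelian, and $E_1 \times \mathbb{Z}_p^{(2)}$ does satisfy the structural constraints but must be ruled out by the multiplier computation.

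The final step is the multiplier bookkeeping. For each surviving structural candidate I would compute $|\mathcal{M}(G)|$ using Theorem \ref{3} (the Künneth-type formula $\mathcal{M}(H \times K) \cong \mathcal{M}(H) \times \mathcal{M}(K) \times (H/H') \otimes (K/K')$) together with Theorem \ref{4} for the extra-special pieces, and known values of $\mathcal{M}(E_4)$. For $E_1 \times \mathbb{Z}_p^{(2)}$: $|\mathcal{M}(E_1)| = p^2$, $|\mathcal{M}(\mathbb{Z}_p^{(2)})| = p$, and $(E_1/E_1') \otimes \mathbb{Z}_p^{(2)} \cong \mathbb{Z}_p^{(2)} \otimes \mathbb{Z}_p^{(2)}$ has order $p^4$, giving $|\mathcal{M}(G)| = p^{2+1+4} = p^7$, whereas $t(G) = 5$ with $n = 5$ demands $|\mathcal{M}(G)| = p^{10-5} = p^5$; so this is excluded. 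For $E_2 \times \mathbb{Z}_p^{(2)}$: $|\mathcal{M}(E_2)| = 1$, $|\mathcal{M}(\mathbb{Z}_p^{(2)})| = p$, and $(E_2/E_2') \otimes \mathbb{Z}_p^{(2)} \cong \mathbb{Z}_p^{(2)} \otimes \mathbb{Z}_p^{(2)}$ of order $p^4$, giving $p^5$ — exactly right. For $E_4 \times \mathbb{Z}_p$: one needs $|\mathcal{M}(E_4)|$ (a value recorded in the references; I expect $p^2$) and $(E_4/E_4') \otimes \mathbb{Z}_p$ of order $p^3$, again assembling to $p^5$. The main obstacle is the structural classification in the middle step: showing that these are the only non-abelian class-$2$ groups of order $p^5$ with $|Z(G)| = p^3$, which requires a careful but standard analysis of the possible orders and relations among $a^p$, $b^p$, and $[a,b]$ inside $Z(G) \cong \mathbb{Z}_p^{(3)}$ or $\mathbb{Z}_{p^2} \times \mathbb{Z}_p$, distinguishing the case where the $p$-power map lands inside $G'$ from the case where it does not, and using $p \neq 2$ to pass to a normal form; one may alternatively invoke the known classification of groups of order $p^5$.
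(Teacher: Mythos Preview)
Your overall strategy is sound and the endgame (K\"unneth computations via Theorems~\ref{3} and~\ref{4}) matches the paper exactly, but the route to the structural classification differs from the paper's and your execution of that middle step has a real gap.

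The paper does \emph{not} attempt a bare-hands classification of all class-$2$ groups of order $p^5$ with $|Z(G)|=p^3$. Instead it uses the multiplier hypothesis \emph{early}: from the inequality $|\mathcal{M}(G)|\,|G'|\le |\mathcal{M}(G/G')|\,|G'\otimes G/Z(G)|$ it forces $G/G'$ to be elementary abelian (otherwise $|\mathcal{M}(G/G')|\le p^3$ and $|\mathcal{M}(G)|\le p^4$), then invokes Jones to bound the exponent of $Z(G)$ by $p^2$, and finally applies \cite[Lemma~2.1]{ni1} to obtain a direct-product splitting $G\cong H\times A$ in each exponent case. Only then does it compute multipliers. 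Your plan reverses this order: classify first, filter by multiplier afterward. That is legitimate in principle, but it means you must actually produce the full list of structural candidates, and you have not.

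Concretely, your enumeration in the second paragraph omits the groups $E_1\times\mathbb{Z}_{p^2}$ and $E_2\times\mathbb{Z}_{p^2}$, both of which are non-abelian of order $p^5$ with centre $\mathbb{Z}_p\times\mathbb{Z}_{p^2}$ of order $p^3$ and $|G'|=p$; the paper treats and excludes exactly these in its Case~II. More seriously, you have not argued that every such $G$ \emph{is} a direct product at all: a priori the images $a^p,b^p\in Z(G)$ could be entangled with $[a,b]$ and with each other in ways that do not split off an abelian direct factor, and nothing short of either the cited decomposition lemma or a genuine normal-form analysis (or the classification of groups of order $p^5$) rules that out. You flag this as ``the main obstacle'' and gesture at a case analysis on the $p$-power map, but as written the list of survivors is incomplete and the reduction to direct products is asserted rather than proved. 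Either carry out that normal-form argument in full, or adopt the paper's shortcut of using the $t(G)=5$ hypothesis up front to force $G/G'$ elementary abelian and then quoting the splitting lemma.
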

\begin{proof}
 It is known by \cite[Theorem 4.1]{kar} that,
\[|\mathcal{M}(G)||G^{'}|\leq |\mathcal{M}(G/G^{'})||G^{'}\otimes
G/Z(G)|.\] We know that  $|G^{'}|=p$ by Theorem \ref{tm1}. Now,  if
$G/G^{'}$ is not elementary abelian, then
$|\mathcal{M}(G/G^{'})|\leq p^3$, and so $|\mathcal{M}(G)|\leq p^4$,
which is a Impossible. Therefore, $G/G^{'}$ is elementary abelian.
On the other hand, \cite[Theorem 2.2]{j2} implies that $Z(G)$ is of
exponent at most $p^2$. Thus two cases may be considered.

Case $I$. First suppose that $Z(G)$ is of exponent $p$. By a result
of \cite[Lemma 2.1]{ni1}, we should have $G\cong H\times
{\mathbb{Z}}^{(2)}_p,$ where $H$ is extra special of order $p^3$.
Since $|\mathcal{M}(G)|=p^5$, Theorems \ref{3} and \ref{4} imply
that $H\cong E_2$.

Case $II$. In this case, similar to pervious part one can see that
$G\cong H\times {\mathbb{Z}}_{p^2},$ where $H$ is extra special of
order $p^3$ or $G\cong E_4\times {\mathbb{Z}}_p$. By invoking
Theorems \ref{3} and \ref{4}, the order of the Schur multiplier of
$H\times {\mathbb{Z}}_{p^2}$ is at most $p^4$, and hence does not
have the property $t(G)=5$. On the other hand, by a result of
\cite[Lemma 3.5]{ni} and Theorem \ref{3}, we should have
$|\mathcal{M}(E_4\times{\mathbb{Z}}_p)|=p^5$, as required.
\end{proof}
\begin{thm} Let $|G|=p^5$ $(p\neq 2)$
 and $|Z(G)|=p^2$. Then there is no group with $t(G)=5$.
\end{thm}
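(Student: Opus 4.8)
The plan is to argue by contradiction. Suppose $G$ has order $p^{5}$ with $p\neq 2$, $|Z(G)|=p^{2}$, and $t(G)=5$. Since $G$ is non-abelian, $|G'|\in\{p,p^{2},p^{3},p^{4}\}$, and by Theorem \ref{tm1} the case $|G'|\ge p^{2}$ cannot occur for a group with $t(G)=5$. Hence $|G'|=p$. A normal subgroup of order $p$ in a $p$-group is central (conjugation gives a homomorphism of $G$ into the order-$(p-1)$ group $\mathrm{Aut}(G')$, hence is trivial), so $G'\le Z(G)$ and $G$ has nilpotency class $2$.

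Next I would pin down $G/Z(G)$. In a class-$2$ group $[x^{p},y]=[x,y]^{p}$, and since $[x,y]\in G'$ has order $p$ this equals $1$ for every $y$; thus $x^{p}\in Z(G)$ for all $x$, so $G/Z(G)$ has exponent $p$. As $G'\le Z(G)$, the quotient $G/Z(G)$ is also abelian, hence an elementary abelian group of order $p^{5-2}=p^{3}$, i.e. a $3$-dimensional vector space over $\mathbb{F}_{p}$. The commutator map descends to a well-defined $\mathbb{F}_{p}$-bilinear alternating form $\beta\colon G/Z(G)\times G/Z(G)\to G'\cong\mathbb{F}_{p}$, $\beta(\bar{x},\bar{y})=[x,y]$, whose radical is precisely $\{\bar{x}: x\in Z(G)\}=\{\bar{0}\}$. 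So $\beta$ would be a non-degenerate alternating form on an odd-dimensional space over a field, which is impossible since the rank of an alternating form is always even. This contradiction shows no such $G$ with $t(G)=5$ exists.

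The only delicate point is the opening reduction via Theorem \ref{tm1} (the sole place where $p\neq 2$ is used); after that the argument is purely structural and uses no property of the Schur multiplier. The fact underlying it is that a class-$2$ group with commutator subgroup of order $p$ is isoclinic to an extra-special $p$-group, so $|G:Z(G)|$ is always an even power of $p$ — in particular never $p^{3}$ — which is exactly what makes $|Z(G)|=p^{2}$ incompatible with $|G'|=p$ at order $p^{5}$. So the ``hard part'' is really just recognizing this reduction; the computation itself is immediate.
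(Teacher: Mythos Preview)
Your proof is correct, and it takes a genuinely different route from the paper's. Both arguments begin by invoking Theorem~\ref{tm1} to reduce to $|G'|=p$, but from there they diverge. The paper proceeds by Schur-multiplier estimates: it rules out the case where $G/G'$ is elementary abelian via \cite[Lemma~2.1]{ni1}, then uses \cite[Proposition~1]{el2} to identify $G/G'$ and to conclude that $Z(G)$ equals the Frattini subgroup, and finally applies the inequality $p^{2}|\mathcal{M}(G)|\le |\mathcal{M}(G/G')|\,|G'\otimes G/Z(G)|\le p^{6}$ to force $|\mathcal{M}(G)|\le p^{4}$, contradicting $t(G)=5$. Your argument, by contrast, is purely structural and never touches the Schur multiplier after the reduction: you observe that $G/Z(G)$ is elementary abelian of order $p^{3}$ and that the commutator pairing gives a non-degenerate alternating form on it, which is impossible in odd dimension. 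In effect you prove the stronger statement that \emph{no} group of order $p^{5}$ with $|G'|=p$ and $|Z(G)|=p^{2}$ exists at all, independently of the value of $t(G)$. This is cleaner and more elementary, while the paper's approach stays within the Schur-multiplier framework used throughout the article and relies on several external references.
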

\begin{proof} We may assume that $G/G^{'}$ is not elementary abelian by appealing to \cite[Lemma
2.1]{ni1}. Using \cite[Proposition 1]{el2}, $G/Z(G)$ is elementary
abelian and $G/G^{'}\cong
 {{\mathbb{Z}}}^{(2)}_{2}\times \mathbb{Z}_{2} $. Hence $Z(G)$ and
Frattini subgroup coincide, and so \cite[Proposition 1]{el2} (see
also \cite[Proposition 5 (i) and (ii)]{el3}) shows that
\[p^2|\mathcal{M}(G)|\leq |\mathcal{M}(G/G^{'})||G^{'}\otimes G/Z(G)|\leq p^6.\]
Thus $|\mathcal{M}(G)|\leq p^4$, which is a contradiction.
\end{proof}
\begin{lem}Every extra special $p$-group of order $p^5$ has the
property $t(G)=5$.
\begin{proof} It is straightforward by Theorem \ref{4}.
\end{proof}
\end{lem}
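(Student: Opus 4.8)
The plan is to reduce the claim to an arithmetic identity and then quote Theorem \ref{4}. If $G$ is extra special of order $p^5$, then $n=5$, so $\tfrac12 n(n-1)=10$, and by definition $G$ has the property $t(G)=5$ if and only if $|\mathcal{M}(G)|=p^{10-5}=p^5$. Thus it suffices to compute the order of the Schur multiplier of an arbitrary extra special $p$-group of order $p^5$.

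For this I would invoke Theorem \ref{4}(i): writing $p^5=p^{2m+1}$ forces $m=2$, which satisfies the hypothesis $m\geq 2$, so $|\mathcal{M}(G)|=p^{2m^2-m-1}=p^{2\cdot 4-2-1}=p^{5}$. Note that this conclusion is independent of which of the (two) isomorphism types of extra special $p$-group of order $p^5$ one takes, since the cited theorem makes no reference to the exponent in the case $m\geq 2$; hence the statement holds for \emph{every} such $G$.

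There is essentially no obstacle here: the only point requiring care is bookkeeping the exponent $\tfrac12 n(n-1)-t(G)$ against the formula $p^{2m^2-m-1}$ and checking that $m=2$ lies in the range where Theorem \ref{4}(i) (rather than the exceptional case (ii)) applies, which it does. Accordingly the proof is the one-line computation already indicated, and no separate treatment of exponent-$p$ versus exponent-$p^2$ extra special groups is needed.
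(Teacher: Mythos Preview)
Your proof is correct and is exactly the argument the paper intends: it simply says ``It is straightforward by Theorem~\ref{4},'' and you have written out the arithmetic ($n=5$, $m=2$, $|\mathcal{M}(G)|=p^{2m^2-m-1}=p^5=p^{\frac12 n(n-1)-5}$) that justifies this one-line citation. There is no difference in approach.
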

\begin{thm} Let $|G|=p^4$ and $|G^{'}|=p$ with $t(G)=5$. Then $G$ is
isomorphic to \[\langle a,b~|~a^{p^2}=1,
b^{p^2}=1,[a,b,a]=[a,b,b]=1, [a,b]=a^p\rangle.\]
\end{thm}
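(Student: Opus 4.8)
The plan is to pin down $G$ among the non-abelian groups of order $p^4$ with $|G'|=p$ by first computing, or bounding, the order of $\mathcal M(G)$ for each candidate and keeping only those for which the Schur multiplier has order $p^{\frac12\cdot 4\cdot 3 - 5}=p^{1}$. First I would reduce to the structure of the centre. Since $|G'|=p$ and $G$ is non-abelian of order $p^4$, we have $G'\le Z(G)$ and $|G/Z(G)|\ge p^2$; being of order $p^2$ it is cyclic, a contradiction, so $|Z(G)|=p^2$ and $G/Z(G)\cong \mathbb Z_p\times\mathbb Z_p$. Next I would split according to whether $G/G'$ is elementary abelian. If $G/G'$ is elementary abelian, then by \cite[Lemma 2.1]{ni1} (applied as in the proof of Theorem \ref{tm}) $G$ decomposes as $H\times\mathbb Z_p$ with $H$ extra special of order $p^3$, or $G$ is one of the finitely many non-decomposable ones; Theorems \ref{3} and \ref{4} then give $|\mathcal M(H\times\mathbb Z_p)|=|\mathcal M(H)|\cdot|\mathcal M(\mathbb Z_p)|\cdot|H/H'\otimes\mathbb Z_p| = |\mathcal M(H)|\cdot p^2$, which is $p^4$ for $H=E_1$ and $p^2$ for $H=E_2$ — in neither case equal to $p$. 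So this branch contributes nothing, and we are forced into the case $G/G'$ \emph{not} elementary abelian.

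In the remaining case $G/G'$ is an abelian group of order $p^3$ that is not elementary abelian and has a cyclic quotient of order at least $p^2$; combined with $|Z(G)|=p^2$ this forces $G/G'\cong\mathbb Z_{p^2}\times\mathbb Z_p$ (it cannot be cyclic, else $G$ would be abelian). Now I would use the Karpilovsky-type inequality $|\mathcal M(G)|\,|G'|\le |\mathcal M(G/G')|\,|G'\otimes G/Z(G)|$ used repeatedly above: here $|G'\otimes G/Z(G)| = p$ since $G'\cong\mathbb Z_p$, and $|\mathcal M(G/G')|=|\mathcal M(\mathbb Z_{p^2}\times\mathbb Z_p)|=p$, giving $|\mathcal M(G)|\le p$. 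The reverse bound $|\mathcal M(G)|\ge p$ (equivalently $t(G)\le 5$) I would obtain by exhibiting $G$ concretely and computing, or by the Hopf-formula lower bound; in fact for the listed presentation $\langle a,b\mid a^{p^2}=b^{p^2}=1,\ [a,b,a]=[a,b,b]=1,\ [a,b]=a^p\rangle$ one checks directly that $d(G)=2$, $|G|=p^4$, $G'=\langle a^p\rangle$ has order $p$, and a Hopf-formula / commutator-calculus computation gives $|\mathcal M(G)|=p$. Finally I would verify that this presentation is, up to isomorphism, the \emph{only} non-abelian group of order $p^4$ with $|G'|=p$ and $G/G'\cong\mathbb Z_{p^2}\times\mathbb Z_p$ and $|Z(G)|=p^2$: among the isoclinism/classification of groups of order $p^4$ (for odd $p$; the case $p=2$ is handled separately in later sections), the only such group with $G'$ of order $p$ lying inside a $\mathbb Z_{p^2}$ factor is the one displayed.

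The main obstacle I expect is the last step — making the classification of the relevant order-$p^4$ groups airtight rather than hand-waved — together with the precise Schur-multiplier computation for the surviving group. The inequality arguments cut down the possibilities quickly, but showing that no \emph{other} isomorphism type of order $p^4$ with $|G'|=p$ slips through (in particular distinguishing this group from the one in item (2)-type families and from $H\times\mathbb Z_p$) requires either quoting the known list of groups of order $p^4$ or a short direct argument that $d(G)=2$ together with $G/G'\cong\mathbb Z_{p^2}\times\mathbb Z_p$ determines $G$. I would lean on the explicit computations of $\mathcal M$ for small $p$-groups already invoked from \cite{ni,ni1,ni2} to close this gap cleanly.
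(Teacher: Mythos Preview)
Your opening case split on whether $G/G'$ is elementary abelian matches the paper's, and the elementary branch is handled the same way (the paper lists $H\times\mathbb Z_p$ and $E_4$, both having $|\mathcal M|\ge p^2$). In the non-elementary branch, however, your argument breaks down in two places.

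First, the tensor computation is wrong: $G/Z(G)\cong\mathbb Z_p\times\mathbb Z_p$, so $|G'\otimes G/Z(G)|=|\mathbb Z_p\otimes(\mathbb Z_p\times\mathbb Z_p)|=p^2$, not $p$. The inequality then gives only $|\mathcal M(G)|\cdot p\le p\cdot p^2$, i.e.\ $|\mathcal M(G)|\le p^2$, which does not isolate $t(G)=5$. Second, and more seriously, your uniqueness claim is false: the modular group $M_{p^4}=\langle a,b\mid a^{p^3}=b^p=1,\ [a,b]=a^{p^2}\rangle$ also satisfies $|G'|=p$, $G/G'\cong\mathbb Z_{p^2}\times\mathbb Z_p$ and $|Z(G)|=p^2$, yet $\mathcal M(M_{p^4})$ is trivial. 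What separates $M_{p^4}$ from the target group is the \emph{isomorphism type} of $Z(G)$ (cyclic of order $p^2$ versus $\mathbb Z_p\times\mathbb Z_p$), an invariant your list does not include. The paper detects exactly this by analysing $G^p$ and $G'\cap G^p$: if $G'\cap G^p=1$ then $G/G^p\cong E_1$ and a standard quotient bound gives $|\mathcal M(G)|\ge|\mathcal M(E_1)|=p^2$; otherwise $G'\le G^p=Z(G)$, and one splits on whether $Z(G)$ is cyclic (giving the exponent-$p^3$ group with trivial multiplier) or $\mathbb Z_p\times\mathbb Z_p$ (giving, via Burnside's list, the unique group in the statement). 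Your inequality route cannot substitute for this structural step.
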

\begin{proof} First suppose that $G/G^{'}$ is elementary. By a
result of \cite[Lemma 2.1]{ni1}, we have  $G\cong H\times
{\mathbb{Z}}_p$ or $G\cong E_4$. The order of Schur multipliers of
both of them is at least $p^2$. Thus $G/G^{'}$ can  not be
elementary abelian. Since $G^p $ and $G^{'}$ are contained in
$Z(G)$, we consider two cases.

Case $I$. First assume that $G^{'}\cap G^p=1$, then $G/G^p\cong
E_1$, and so $|\mathcal{M}(G)|\geq |\mathcal{M}(E_1)|=p^2$ directly
by using \cite[Corollary 2.5.3 (i)]{kar}.

Case $II$. In this case, we have two possibilities for $Z(G)$. The
first possibility is $Z(G)=G^p\cong {\mathbb{Z}}_{p^2}$, thus $G$ is
of exponent $p^3$ and obviously $|\mathcal{M}(G)|=1$.
 The second possibility is
$Z(G)=G^p\cong {\mathbb{Z}}_p\times G^{'}$. By \cite[pp. 87-88]{bu},
there is a unique group of order $p^4$ with this properties, which
is isomorphic to  \[\langle a,b~|~a^{p^2}=1,
b^{p^2}=1,[a,b,a]=[a,b,b]=1, [a,b]=a^p\rangle.\]
\end{proof}
\begin{lem} Let $|G|=p^4$ and $|G^{'}|=p^2$ with $t(G)=5$. Then $G$ is isomorphic
to\[\langle a,b~|~a^{p^2}=b^p=1,[a,b,a]=[a,b,b]=a^p,[a,b,b,b]=1
\rangle,\]
\[\langle a,b~|~a^{p^2}=b^p=1,[a,b,a]=1,[a,b,b]=a^{np},[a,b,b,b]=1
\rangle,\] where n is a fixed quadratic non-residue of $p$ and
$p\neq 3$,
\[\langle a,b~|~a^{p^2}=1,b^3=a^3,[a,b,a]=1,[a,b,b]=a^{6},[a,b,b,b]=1
\rangle,\]
\[\langle a,b~|~a^p=1,b^p=[a,b,b],[a,b,a]=[a,b,b,a]=[a,b,b,b]=1
\rangle.\]
\end{lem}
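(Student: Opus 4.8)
The plan is to work through the finitely many isomorphism types of groups $G$ of order $p^4$ with $|G'|=p^2$, and to isolate exactly those with $t(G)=5$, i.e.\ $|\mathcal{M}(G)|=p^{\frac12\cdot 4\cdot 3-5}=p$. Since $|G'|=p^2$ and $|G|=p^4$, such a $G$ is metabelian of maximal class (so $Z(G)$ has order $p$, $G/Z(G)$ is nonabelian of order $p^3$, and $G$ is minimally $2$-generated); these groups are completely classified in the literature (for $p$ odd one has the families appearing in the statement, together with finitely many sporadic groups for $p=3$, while $p=2$ gives $D_8$, $Q_8$, etc.). So the first step is to invoke this classification and write down the short list of candidate presentations, splitting into the cases $p\geq 5$, $p=3$, and $p=2$.

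Next, for each candidate I would compute $|\mathcal{M}(G)|$. The cleanest tool here is Theorem~\ref{2}: with $n=4$, $k=2$ it gives $|\mathcal{M}(G)|\leq p^{\frac12(n+k-2)(n-k-1)+1}=p^{\frac12\cdot 4\cdot 1+1}=p^{3}$. This does not by itself force $t(G)=5$, so I need lower bounds and exact values. For the lower bound I would use the Hopf-type formula / a minimal free presentation $G=F/R$ with $F$ free of rank $2$: then $\mathcal{M}(G)=(R\cap F')/[F,R]$, and $\dim_{\mathbb{Z}_p}$ of this can be estimated by counting relators against $d(R/[F,R])$, which combined with the known value of $d(G^{ab})$ pins down $|\mathcal{M}(G)|$ up to a small range. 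Alternatively — and this is probably the more economical route given the machinery already assembled — I would use Theorem~\ref{3} together with the known multipliers of the quotients $G/K$ for $K\le Z(G)$ of order $p$, via the standard exact sequence $\mathcal{M}(G)\to \mathcal{M}(G/K)\to K\cap G' \to \ldots$ (\cite[Theorem 4.1]{kar}), exactly as in the proof of Theorem~\ref{tm1}: $|\mathcal{M}(G)|\,|K\cap G'|$ divides $|\mathcal{M}(G/K)|\,|K\otimes G/G'|$, and here $K\le G'$ and $G/G'\cong \mathbb{Z}_{p^2}\times\mathbb{Z}_p$ (the abelianization is forced since $d(G)=2$ and $|G'|=p^2$, with $G/G'$ not elementary abelian because the elementary-abelian-abelianization case was already excluded in the preceding theorem). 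Since $G/K$ has order $p^3$, its multiplier is read off from Theorem~\ref{4} and the classification of order-$p^3$ groups, giving $|\mathcal{M}(G)|\in\{1,p\}$ in each case; one then checks which groups land on $p$.

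The remaining work is bookkeeping: for the four listed presentations I would verify $|\mathcal{M}(G)|=p$ (equivalently rule out $\mathcal{M}(G)=1$) by exhibiting a nontrivial element of $\mathcal{M}(G)$ — e.g.\ via an explicit central extension of $G$ by $\mathbb{Z}_p$ that is a stem extension, or by the commutator-relator count above — and for every other metabelian maximal-class group of order $p^4$ I would show $|\mathcal{M}(G)|=1$, typically because such $G$ is a quotient of a one-relator-heavy presentation or because $G$ has no nontrivial stem extension. The case $p=2$ should be handled separately and quickly: the groups of order $16$ with $|G'|=4$ are $D_{16}$, $Q_{16}$, $SD_{16}$, and the one in item (12), and their multipliers are classical; since the statement lists $D_{16}$ and the group $\langle a,b\mid a^4=b^4=1,a^{-1}ba=b^{-1}\rangle$ separately in the Main Theorem, presumably these are collected under a different lemma, so in the present lemma I may restrict to $p$ odd, or note that the $p=2$ maximal-class groups are treated elsewhere.

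The main obstacle I anticipate is the exact evaluation of the multiplier for the sporadic $p=3$ group (item (9)) and, more generally, separating the ``$\mathcal{M}(G)=p$'' cases from the ``$\mathcal{M}(G)=1$'' cases among presentations that look nearly identical (items (7), (8), (9) differ only in the value of a commutator coefficient, and for $p=3$ the quadratic-residue dichotomy degenerates). Here the quotient-based inequality alone may be too coarse, and I would likely need the finer invariant — either a direct computation of $(R\cap F')/[F,R]$ for the specific presentations, or the isoclinism-based tables of \cite{ni} / \cite{ni2} — to certify that the multiplier is exactly $p$ and that no further group slips into the list.
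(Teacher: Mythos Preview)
Your plan---enumerate the groups of order $p^4$ with $|G'|=p^2$ and compute each multiplier---is the right shape, but the paper's proof is nothing more than a citation: the presentations come from \cite[p.~88]{bu} (see also \cite[pp.~196--198]{sch}) and the Schur multipliers are read off from \cite[p.~4177]{el}, where Ellis has already tabulated them. The ``main obstacle'' you anticipate (separating $|\mathcal{M}(G)|=p$ from $|\mathcal{M}(G)|=1$ among near-identical presentations) is precisely the work the paper outsources; your proposal amounts to reproving part of \cite{el}.

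Two slips to fix if you pursue the hands-on route. First, $|G/G'|=p^{4}/p^{2}=p^{2}$, not $p^{3}$; and since a maximal-class $G$ is $2$-generated, $G/G'\cong\mathbb{Z}_p^{(2)}$ is \emph{elementary} abelian---the opposite of what you claim, and the preceding theorem (which treats $|G'|=p$) has no bearing here. Second, with the corrected $G/G'$ the inequality from \cite[Theorem~4.1]{kar} applied to the unique central $K=Z(G)$ of order $p$ gives only $|\mathcal{M}(G)|\cdot p\le |\mathcal{M}(G/K)|\cdot p^{2}$; since $G/K$ is nonabelian of order $p^{3}$ (and is $E_1$ for several of the listed groups, with $|\mathcal{M}(E_1)|=p^2$), this does not by itself pin $|\mathcal{M}(G)|$ down to $p$. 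So you would genuinely need the Hopf/relator count or explicit stem covers for each presentation---which is exactly what Ellis's tables supply.
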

\begin{proof} The result is obtained from \cite[pp. 4177]{el} and \cite[pp. 88]{bu}
$($see also \cite[pp. 196-198]{sch}$)$.
\end{proof}
\begin{lem} Let $G$ be a $p$-group of order $16$ with $t(G)=5$.
Then $G$ is isomorphic to \[ D_{16}~\text{or}~\langle
a,b~|~a^4=b^4=1,a^{-1}ba=b^{-1} \rangle.\]
\end{lem}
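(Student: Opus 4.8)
The plan is to enumerate the groups of order $16$ directly and compute their Schur multipliers, isolating exactly those for which $t(G)=5$, i.e. for which $|\mathcal{M}(G)|=2^{\frac{1}{2}\cdot 4\cdot 3-5}=2$. For $p=2$ and $n=4$ we have $t(G)=5$ precisely when $|\mathcal{M}(G)|=2$. First I would dispose of the abelian and the reducible cases: if $G=H\times K$ nontrivially, then Theorem \ref{3} forces $|\mathcal{M}(G)|$ to be too large (the cross term $H/H'\otimes K/K'$ together with $\mathcal{M}(H)$ already pushes the order above $2$), so $G$ must be directly indecomposable and non-abelian. This leaves, from the standard list of the fourteen groups of order $16$, only the directly indecomposable non-abelian ones to examine.

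Next I would invoke the standard quotient/subgroup bounds to eliminate most of the remaining candidates. For each directly indecomposable non-abelian $G$ of order $16$, pick a central subgroup $K$ of order $2$; then \cite[Theorem 4.1]{kar} gives $|\mathcal{M}(G)|\,|K\cap G'|$ bounded in terms of $|\mathcal{M}(G/K)|$ and a tensor factor, and since $|G/K|=8$ the multiplier $\mathcal{M}(G/K)$ is one of the known values ($1$ for $Q_8,E_2$; $2$ for $D_8$; larger for the abelian and $\mathbb{Z}_2\times\mathbb{Z}_2$-type quotients) from Theorem \ref{4}. Groups with $|\mathcal{M}(G)|\ge 4$ (such as the modular group $M_{16}$, the Pauli-type group, and others with an elementary abelian quotient of rank $\ge 3$) are discarded; groups with $|\mathcal{M}(G)|=1$ (the generalized quaternion $Q_{16}$, and the cyclic cases) are likewise discarded. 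The survivors should be exactly $D_{16}$ and the group $\langle a,b\mid a^4=b^4=1,\ a^{-1}ba=b^{-1}\rangle$.

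Finally, for these two survivors I would verify $|\mathcal{M}(G)|=2$ explicitly. For $D_{16}$ this is classical: dihedral $2$-groups $D_{2^m}$ have Schur multiplier of order $2$ for $m\ge 3$, which one reads off from the Hopf formula or from the known cohomology of dihedral groups. For $G=\langle a,b\mid a^4=b^4=1,\ a^{-1}ba=b^{-1}\rangle$ (the semidihedral/quasidihedral-adjacent group of order $16$, often listed as $\mathrm{SD}_{16}$ or as a specific metacyclic group), I would present it as a metacyclic extension $\mathbb{Z}_4\rtimes\mathbb{Z}_4$ and apply the standard formula for the Schur multiplier of a metacyclic group, or alternatively compute via a free presentation and the Hopf formula $\mathcal{M}(G)\cong (R\cap F')/[F,R]$. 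The main obstacle is this last explicit computation: one must be careful that the commutator relations do not force $\mathcal{M}(G)$ to collapse to $1$ or swell past $2$, and getting the metacyclic Schur multiplier formula applied with the correct parameters is the delicate point. As in the lemma immediately preceding, I would also be content to cite \cite[pp. 88]{bu} and \cite[pp. 196--198]{sch}, where these order-$16$ computations are tabulated, to shortcut the arithmetic.
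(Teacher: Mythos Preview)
Your strategy of enumerating the fourteen groups of order $16$ and computing each multiplier is sound, and in fact more explicit than the paper's own proof, which simply refers the reader to a table in \cite{ni4}. However, two factual slips would cause your argument, as written, to misfire.

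First, the modular group $M_{16}=\langle a,b\mid a^8=b^2=1,\ bab^{-1}=a^5\rangle$ is metacyclic with \emph{trivial} Schur multiplier, not one of order $\ge 4$; it must be discarded alongside $Q_{16}$ and $SD_{16}$ in the $|\mathcal{M}(G)|=1$ bin, not in the $|\mathcal{M}(G)|\ge 4$ bin. Second, and more seriously, the group $\langle a,b\mid a^4=b^4=1,\ a^{-1}ba=b^{-1}\rangle$ is \emph{not} the semidihedral group. The semidihedral group $SD_{16}=\langle r,s\mid r^8=s^2=1,\ srs^{-1}=r^3\rangle$ contains elements of order $8$ and has trivial Schur multiplier; the group in the statement is the metacyclic group $\mathbb{Z}_4\rtimes\mathbb{Z}_4$ with the inversion action, has exponent $4$, and is a different entry in the order-$16$ list. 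If you had actually treated it as $SD_{16}$ and applied a metacyclic-multiplier formula with parameters $(8,2,3)$, you would have obtained $|\mathcal{M}|=1$ and lost this group from your conclusion. With the correct identification and parameters $(4,4,-1)$, the standard metacyclic formula (or the Hopf formula) does give $|\mathcal{M}(\mathbb{Z}_4\rtimes\mathbb{Z}_4)|=2$, and the enumeration then goes through.

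One final remark: the references \cite{bu} and \cite{sch} that you propose to cite classify the groups of order $p^4$ but do not tabulate their Schur multipliers; in the preceding lemma the multiplier information comes from \cite{el}, not from those sources. For the $2$-group case the appropriate citation is the table in \cite{ni4}, as the paper does, or a direct GAP/HAP verification.
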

\begin{proof} See table $I$ on \cite{ni4}.
\end{proof}
\begin{lem} Let $G$ be a $p$-group of order $32$ with $t(G)=5$.
Then $G$ is isomorphic to \[ Q_{8}\times
{{\mathbb{Z}}}^{(2)}_{2},(D_8\times{\mathbb{Z}}_{2})\rtimes{\mathbb{Z}}_{2},
(Q_8\times{\mathbb{Z}}_{2})\rtimes{\mathbb{Z}}_{2}
~\text{or}\]\[{\mathbb{Z}}_{2}\times \langle a,b,c~|~a^2=b^2=c^2=1,
abc=bca=cab\rangle.\]
\end{lem}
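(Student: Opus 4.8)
The plan is to parallel the previous lemma for order $16$ and reduce the problem to a finite check against the known classification of groups of order $32$. First I would invoke Lemma \ref{m1}: since $|G|=32=2^5$ and $G$ is a $2$-group with $t(G)=5$, the derived subgroup must satisfy $|G'|\le 2^2$, for otherwise $n\le 4$ by that lemma. So either $|G'|=2$ or $|G'|=4$ (the abelian case $|G'|=1$ is excluded since for abelian groups $t(G)=0$ forces the Schur multiplier to have the generic order, and in any event the statement concerns non-abelian $G$). This already restricts attention to a short list of isoclinism families among the $51$ groups of order $32$.

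Next I would handle the two cases. When $|G'|=2$, Theorem \ref{2} gives $|\mathcal{M}(G)|\le p^{\frac12(n+k-2)(n-k-1)+1}=p^{\frac12\cdot 5\cdot 2+1}=p^{6}$ with $n=5$, $k=1$; we need $|\mathcal{M}(G)|=p^{10-5}=p^{5}$, so this is close to the Niroomand bound and one can further use Theorem \ref{4} together with the decomposition results (Theorem \ref{3} and \cite[Lemma 2.1]{ni1}) to pin down the groups of the form $H\times\mathbb{Z}_2$ or central products, exactly as in the order-$p^5$ extra-special and $E_4$ analyses above. When $|G'|=4$, I would again combine Theorem \ref{2} with the structural input of \cite{el2,el3} on groups whose central quotient is elementary abelian, as was done in the $|Z(G)|=p^2$ case for order $p^5$, to narrow the candidates. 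In both cases the residual short list of groups should be checked directly against the tabulated Schur multiplier orders.

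The cleanest route, and the one I expect the author takes, is simply to consult the explicit tables of Schur multipliers of groups of order $32$ — the same reference \cite{ni4} (or the standard lists, e.g. in the small-groups literature) used for the order-$16$ lemma — and read off precisely those $G$ with $|\mathcal{M}(G)|=2^{5}$. So the proof is: by Lemma \ref{m1} we have $|G'|\le 4$, which together with non-abelianness confines $G$ to finitely many isoclinism types; inspecting the known values of $|\mathcal{M}(G)|$ for all groups of order $32$ (see the table in \cite{ni4}) shows that exactly the four listed groups $Q_8\times\mathbb{Z}_2^{(2)}$, $(D_8\times\mathbb{Z}_2)\rtimes\mathbb{Z}_2$, $(Q_8\times\mathbb{Z}_2)\rtimes\mathbb{Z}_2$, and $\mathbb{Z}_2\times\langle a,b,c\mid a^2=b^2=c^2=1,\ abc=bca=cab\rangle$ satisfy $t(G)=5$.

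The main obstacle is not any single hard estimate but rather the bookkeeping: of the fifty-one groups of order $32$, one must be sure to have correctly matched each candidate's Schur multiplier order, and in particular to rule out near-misses (groups with $t(G)=4$ or $t(G)=6$) that are isoclinic or otherwise structurally similar to the four survivors. The identification $\mathbb{Z}_2\times\langle a,b,c\mid a^2=b^2=c^2=1,\ abc=bca=cab\rangle$ — the direct factor being a group of order $16$ — should be cross-checked using Theorem \ref{3} to express its multiplier in terms of the order-$16$ factor's multiplier, which also reconciles this lemma with the preceding order-$16$ lemma. If one prefers a table-free argument, the hardest sub-case will be $|G'|=4$, where several distinct groups survive the Niroomand bound and distinguishing them requires either an explicit presentation-based computation of $\mathcal{M}(G)$ or an appeal to the Blackburn–Evens–type results already cited for the order-$p^5$ analogue.
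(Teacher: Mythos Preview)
Your proposal is sound in spirit---this is indeed a finite-check lemma---but the paper's actual proof is even more direct than you guess: it consists of a single sentence saying that the list is obtained using the HAP package \cite{el5} in GAP \cite{gap}. There is no preliminary structural reduction via Lemma~\ref{m1} or Theorem~\ref{2}, no case split on $|G'|$, and no appeal to the table in \cite{ni4}; the author simply lets the computer run through all groups of order $32$ and report those with $|\mathcal{M}(G)|=2^{5}$.

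Your route would work, but the extra structural analysis (bounding $|G'|$, invoking the Niroomand bound, reducing to isoclinism families) buys nothing once one is willing to do an exhaustive machine check, and the paper does not attempt a table-free argument. One small slip in your write-up: it is not true that abelian $p$-groups always have $t(G)=0$---that holds only for elementary abelian groups---so your parenthetical reason for excluding the abelian case is off; the correct justification is simply that the ambient Main Theorem treats only non-abelian $G$.
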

\begin{proof} These groups are obtained by using the HAP package \cite{el5}  of GAP
\cite{gap}.
\end{proof}

\end{document}